
\documentclass[11pt]{amsart}
\usepackage[usenames]{color}
\usepackage{fullpage,amscd,amssymb,latexsym,url,hyperref}
\usepackage[all,2cell,ps]{xy}
\usepackage[T1]{fontenc}

\theoremstyle{plain}
\newtheorem{thm}{Theorem}[section]
\newtheorem{theorem}[thm]{Theorem}

\newtheorem{proposition}[thm]{Proposition}

\theoremstyle{definition}
\newtheorem{de}[thm]{Definition}

\newtheorem{remark}[thm]{Remark}
\newtheorem{example}[thm]{Example}
\newtheorem{algorithm}[thm]{Algorithm}

\newcommand{\Z}{\mathbb{Z}}

\newcommand{\bigo}{\mathcal{O}}

\newcommand{\aff}[1]{\mathrm{Aff}(#1)}
\newcommand{\aut}[1]{\mathrm{Aut}(#1)}
\newcommand{\dis}[1]{\mathrm{Dis}(#1)}

\newcommand{\lmlt}[1]{\mathrm{LMlt}(#1)}

\newcommand{\im}[1]{\mathrm{Im}(#1)}

\newcommand{\ld}{\backslash}

\numberwithin{equation}{section}

\begin{document}

\title{Homomorphic images of affine quandles}

\author{P\v remysl Jedli\v cka}
\author{David Stanovsk\'y}

\address{(P.J.) Department of Mathematics, Faculty of Engineering, Czech University of Life Sciences, Kam\'yck\'a 129, 16521 Praha 6, Czech Republic}
\address{(D.S.) Department of Algebra, Faculty of Mathematics and Physics, Charles University, Sokolovsk\'a 83, 18675 Praha 8, Czech Republic}

\email{(P.J.) jedlickap@tf.czu.cz}
\email{(D.S.) stanovsk@karlin.mff.cuni.cz}

\thanks{The second author was partly supported by the GA\v CR grant 13-01832S}

\keywords{Quandle, medial quandle, abelian quandle, quandle quotients.}

\subjclass[2010]{20N02, 15A78, 57M27.}
\date{\today}

\begin{abstract}
We are interested in abstract conditions that characterize homomorphic images of affine quandles. Our main result is a two-fold characterization of this class: one by a property of the displacement group, the other one by a property of the corresponding affine mesh. As a consequence, we obtain efficient algorithms for recognizing homomorphic images of affine quandles, including an efficient explicit construction of the covering affine quandle.
\end{abstract}

\maketitle

\section{Introduction}

Quandles are self-distributive structures that appear naturally in the context of knots, braids and in many other situations \cite{EN}. 
Affine quandles (also called Alexander quandles) play a prominent role in quandle theory. From the algebraic perspective, they constitute an important building block \cite{AG,BS,JPSZ}, and it has been observed that many small connected quandles are affine \cite{ESG,G}. In knot theory, there is a close connection between the Alexander invariant and coloring by affine quandles \cite{Bae,Joy}.

In the present paper, we focus on quandles that are \emph{homomorphic images} (or quotients) \emph{of} affine quandles. The paper is, in a way, a blueprint of our recent writeup \cite{JPSZ-qa} on quandles that \emph{embed into} affine quandles. The similarities and differences are outlined below, and explained in detail in Section \ref{sec:main}.

Affine quandles are medial, and so are their subquandles and homomorphic images. Therefore, one can use the representation developed in \cite{JPSZ}, where medial quandles are described using certain heterogeneous affine structures, called affine meshes. For both classes, we have a two-fold characterization (Theorems \ref{thm:embed_affine} and Theorem \ref{thm:homim_affine}): one by a property of the displacement group, the other one by a property of the corresponding affine mesh. 

In both cases, the characterizing conditions are both algorithmically efficient and fairly easy to check for concrete small examples, given either by a mesh, or by a multiplication table. In both proofs, the hard step is, to find an affine quandle such that a given quandle embeds into, resp. projects onto, it. There is, however, one significant difference: for embeddings, our proof is not constructive and we do not know an efficient way to construct such an affine quandle; for quotients, our proof is constructive and turns into a polynomial-time algorithm that constructs the affine quandle.

The two characterization theorems, and several examples illustrating the similarities and differences, are formulated in Section~\ref{sec:main}.
The final Section \ref{sec:proof} contains the proof of the main theorem and an explicit statement of the algorithms based on the proof. 
%

\section{Preliminaries} \label{sec:terminology}

\subsection{Quandles}

For a general introduction to quandle theory we refer to \cite{AG,EN}. The proofs of all statements in this subsection can be found in the introductory part of \cite{HSV} (and also elsewhere, in various notation systems).

We will write mappings acting on the left, hence conjugation in groups will be denoted by $x^y=yxy^{-1}$, and consequently, the commutator will be defined as $[x,y]=y^xy^{-1}=xyx^{-1}y^{-1}$.

A \emph{quandle} is an algebraic structure $(Q,*)$ which is \emph{idempotent} (it satisfies the identity $x*x=x$) and in which all left translations, $L_x(y)=x*y$, are automorphisms.
The unique $y$ such that $x*y=z$ will be denoted $y=x\ld z$. There are two important permutation groups associated to every quandle:
the (left) \emph{multiplication group}, generated by all left translations,
\[\lmlt Q=\langle L_a:\ a\in Q\rangle\leq\aut Q,\]
and its subgroup, the \emph{displacement group}, defined by
\[\dis Q=\langle L_aL_b^{-1}:\ a,b\in Q\rangle\leq\lmlt Q.\]
It is easy to see that $\dis Q=\langle L_aL_e^{-1}:\ a\in Q\rangle$ for any fixed $e\in Q$.
Both groups have the same orbits of the natural action on $Q$, to be called \emph{orbits} of the quandle $Q$, and denoted 
\[Qe=\{\alpha(e):\ \alpha\in\lmlt Q\}=\{\alpha(e):\ \alpha\in\dis Q\}.\]
Orbits are subquandles of $Q$. They form a block system, to be called the \emph{orbit decomposition} of $Q$.

Observe that $L_{\alpha(x)}=(L_x)^\alpha$ for every automorphism $\alpha$. Consequently, both $\lmlt Q$ and $\dis Q$ are normal in $\aut Q$.

Let $\lambda=\{(a,b):L_a=L_b\}$ denote the \emph{Cayley kernel} of a quandle $Q$. This is always a congruence on $Q$, since it is the kernel of the quandle homomorphism $a\mapsto L_a$ from $Q$ into the symmetric group $S_Q$ under the conjugation operation.

A quandle is called \emph{medial} if it satisfies the identity $(x*y)*(u*v)=(x*u)*(y*v)$. 
A quandle is medial if and only if its displacement group is abelian. 

Let $(A,+)$ be an abelian group, $f$ its automorphism, and define an operation on the set $A$ by
\[a*b=(1-f)(a)+f(b).\]
Then $(A,*)$ is a medial quandle, to be denoted $\aff{A,f}$, and called \emph{affine} over the group $(A,+)$.
Here $1$ refers to the identity mapping, hence $g=1-f$ is the mapping $g(x)=x-f(x)$.

Let $Q=\aff{A,f}$. Then $\dis Q\simeq\im{1-f}$, where $a\in\im{1-f}$ corresponds to the mapping $x\mapsto a+x$ (indeed, $L_aL_b^{-1}(x)=(1-f)(a-b)+x$).
Hence, the orbits of $Q$ are the cosets of $\im{1-f}$. 


\subsection{Affine meshes}

In \cite{JPSZ}, we developed a representation of medial quandles by affine meshes. 
Here we recall the essential constructions and results.

\begin{de}
An \emph{affine mesh} over a non-empty set $I$ is a triple
$$\mathcal A=((A_i)_{i\in I};\,(\varphi_{i,j})_{i,j\in I};\,(c_{i,j})_{i,j\in I})$$ where $A_i$ are abelian groups, $\varphi_{i,j}:A_i\to A_j$ homomorphisms, and $c_{i,j}\in A_j$ constants, satisfying the following conditions, for every $i,j,j',k\in I$:
\begin{enumerate}
    \item[(M1)] $1-\varphi_{i,i}$ is an automorphism of $A_i$;
    \item[(M2)] $c_{i,i}=0$;
    \item[(M3)] $\varphi_{j,k}\varphi_{i,j}=\varphi_{j',k}\varphi_{i,j'}$, i.e., the following diagram commutes:
$$\begin{CD}
A_i @>\varphi_{i,j}>> A_j\\ @VV\varphi_{i,j'}V @VV\varphi_{j,k}V\\
A_{j'} @>\varphi_{j',k}>> A_k
\end{CD}$$
    \item[(M4)] $\varphi_{j,k}(c_{i,j})=\varphi_{k,k}(c_{i,k}-c_{j,k})$.
\end{enumerate}
The mesh is called \emph{indecomposable} if,
for every $j\in I$, the group $A_j$ is generated by all the elements $c_{i,j}$, $\varphi_{i,j}(a)$ with $i\in I$ and $a\in A_i$.
\end{de}

If the index set is clear from the context, we shall write briefly $\mathcal A=(A_i;\varphi_{i,j};c_{i,j})$.

\begin{de}
The \emph{sum of an affine mesh} $(A_i;\varphi_{i,j};c_{i,j})$ is an algebraic structure $(A,*)$ defined on the disjoint union of the sets $A_i$ by
\[ a\ast b=c_{i,j}+\varphi_{i,j}(a)+(1-\varphi_{j,j})(b). \]
for every $a\in A_i$ and $b\in A_j$.
\end{de}

The sum of any affine mesh is a medial quandle, with $a\ld b=(1-\varphi_{j,j})^{-1}(b-\varphi_{i,j}(a)-c_{i,j})$.
The fibers $A_i$ form subquandles which are affine, namely, $\aff{A_i,1-\varphi_{i,i}}$.
If the mesh is indecomposable, the fibers coincide with the orbits.

\begin{theorem}\cite{JPSZ}\label{thm:decomposition}
A binary algebraic structure is a medial quandle if and only if it is the sum of an indecomposable affine mesh.
\end{theorem}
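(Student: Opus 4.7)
The statement is a biconditional, and I plan to treat the two directions separately.

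For the forward direction, the argument is a routine verification that the sum of any affine mesh $\mathcal{A}=(A_i;\varphi_{i,j};c_{i,j})$ satisfies the quandle identities and mediality. Idempotence reduces to $c_{i,i}=0$ together with the telescoping $\varphi_{i,i}(a)+(1-\varphi_{i,i})(a)=a$. Invertibility of every left translation on its target fiber is exactly what (M1) guarantees. Expanding both sides of self-distributivity and of mediality with the sum formula produces polynomial expressions in the inputs; matching the coefficients of the variables gives (M3), and matching the constant terms gives (M4). This direction is mechanical, with no real obstacles.

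For the backward direction, given a medial quandle $Q$, the plan is to reconstruct the mesh intrinsically. I will start from the orbit decomposition $(A_i)_{i\in I}$ of $Q$ and pick a base point $e_i$ in each $A_i$. Since $Q$ is medial, $\dis Q$ is abelian, so its action on each $A_i$ is regular up to a normal stabilizer; this lets me declare $\alpha(e_i)+\beta(e_i):=(\alpha\beta)(e_i)$ and turn $A_i$ into an abelian group with zero $e_i$. By the classical identification of connected medial quandles with affine ones, each $A_i$ is then itself an affine quandle $\aff{A_i,1-\varphi_{i,i}}$ for a uniquely determined automorphism $\varphi_{i,i}$.

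The cross-orbit data are forced by the desired formula: I will set $c_{i,j}=e_i*e_j$ and, for $a\in A_i$, $\varphi_{i,j}(a)=a*e_j-e_i*e_j$, with arithmetic in the group on $A_j$. The main obstacle will be verifying that each $\varphi_{i,j}$ is a well-defined group homomorphism and that (M1)--(M4) hold. Well-definedness and additivity of $\varphi_{i,j}$ should drop out by writing $a=\alpha(e_i)$ with $\alpha\in\dis Q$ and using that $\dis Q$ is abelian to push the sum outside. Axiom (M3) should follow from the medial law by rewriting both sides of $(a*b)*(u*v)=(a*u)*(b*v)$ via the sum formula and comparing the coefficients of $a$; axiom (M4) should follow similarly from self-distributivity applied to triples where two of the three arguments are base points. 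Once (M1)--(M4) are established, a final substitution checks that the reconstructed sum formula reproduces $a*b$. Indecomposability will then come for free: the subgroup generated by the $c_{i,j}$ and the $\varphi_{i,j}(a)$ contains every $a*e_j=L_a(e_j)$, and as $a$ ranges over $Q$, the element $L_a(e_j)$ sweeps out the whole orbit $A_j$.
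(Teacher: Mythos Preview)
The paper does not prove this theorem; it is quoted from \cite{JPSZ} and no argument is reproduced here, so there is nothing in-paper to compare against. Your overall architecture---build the mesh from the orbit decomposition, transport an abelian group structure from $\dis Q$ onto each orbit, then read off $\varphi_{i,j}$ and $c_{i,j}$ from the operation---is the same as in the cited source.

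Two steps in your sketch need repair. First, you justify that each orbit $A_i$ is affine by invoking the classical fact that \emph{connected} medial quandles are affine. But an orbit of $Q$ need not be connected as a subquandle: in any $2$-reductive medial quandle (all $\varphi_{i,j}=0$) each orbit carries the projection operation $a*b=b$ and splits into singleton suborbits. The affineness of $(A_i,*)$ over the group you built has to be checked directly: with $a=\alpha(e_i)$, $b=\beta(e_i)$ for $\alpha,\beta\in\dis Q$, the identity $L_{\alpha(e_i)}=\alpha L_{e_i}\alpha^{-1}$ and commutativity of $\dis Q$ give $a*b=\alpha\cdot(L_{e_i}L_{e_i}^{-1}\text{-type})\cdot\beta$ in a form that is visibly affine in $(A_i,+)$; this is where $1-\varphi_{i,i}\in\aut{A_i}$ comes from, not from connectedness.

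Second, your indecomposability argument asserts that $\{L_a(e_j):a\in Q\}$ ``sweeps out'' the orbit $A_j$. Read as a set equality this fails (same $2$-reductive example: the set is only $\{c_{i,j}:i\in I\}$, finite even when $A_j$ is large). What is true, and what you actually need, is that these elements \emph{generate} $A_j$ as a group: since $\dis Q=\langle L_aL_{e_j}^{-1}:a\in Q\rangle$ and $L_{e_j}^{-1}(e_j)=e_j$ by idempotence, every element of the orbit $A_j$ is a $\Z$-linear combination of the elements $L_aL_{e_j}^{-1}(e_j)=L_a(e_j)=c_{i,j}+\varphi_{i,j}(a)$, which lie in the subgroup in question.
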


%

\section{The two characterization theorems}\label{sec:main}

The key to recognition of quandles that embed into an affine quandle is the \emph{lack of fixed points} (i.e., semiregularity) in the displacement group, which translates into a certain form of homogenity of the underlying meshes. The following is a reformulation of the main result of \cite{JPSZ-qa} (the special type of meshes in condition (3) was called a \emph{semiregular extension}).

\begin{theorem}\cite{JPSZ-qa}\label{thm:embed_affine}    
The following statements are equivalent for a quandle $Q$:
\begin{enumerate}
	\item $Q$ embeds into an affine quandle;
	\item $\dis Q$ is abelian and semiregular;
	\item there is an abelian group $A$, an automorphism $\psi$ of $A$ and elements $d_i\in A$ such that 
	$Q$ is isomorphic to the sum of an affine mesh $((A_i);(\varphi_{i,j});(c_{i,j}))$ where $A_i=A$ for every $i$, and $\varphi_{i,j}=1-\psi$ and $c_{i,j}=d_i-d_j$ for every $i,j\in I$.
\end{enumerate}
\end{theorem}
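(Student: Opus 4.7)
The plan is to close the cycle $(1) \Rightarrow (2) \Rightarrow (3) \Rightarrow (1)$. The easy direction $(1) \Rightarrow (2)$ goes as follows: if $Q$ is a subquandle of $\aff{A,f}$, the group $\dis{\aff{A,f}}$ consists of the translations of $A$ by elements of $\im{1-f}$, so every $\alpha \in \dis Q$ is the restriction to $Q$ of such a translation. Since two translations of $A$ agreeing on the non-empty set $Q$ must coincide, the assignment $\sigma\colon \dis Q \to A$ sending $\alpha$ to the amount of its extension is a well-defined injective group homomorphism; this embeds $\dis Q$ into an abelian group, and forces any $\alpha$ fixing a point of $Q$ to satisfy $\sigma(\alpha)=0$ and hence to be the identity.

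For $(3) \Rightarrow (1)$ we exhibit an explicit embedding. Let $M$ be the free abelian group on the index set $I$ and let $\mu\colon M \to A$ be the homomorphism with $\mu([i]) = -d_i$; define $\bar\psi\colon A \oplus M \to A \oplus M$ by $\bar\psi(a,m) := (\psi(a) + \mu(m), m)$. One checks that $\bar\psi$ is a group automorphism. The map $\Phi\colon Q \to \aff{A \oplus M, \bar\psi}$, $\Phi_i(a) := (a, [i])$, is manifestly injective (elements of distinct fibers have distinct second coordinates), and a direct computation using the explicit formula $a * b = (d_i - d_j) + (1-\psi)(a) + \psi(b)$ given by~(3) verifies that $\Phi$ preserves the quandle operation.

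The substantive step is $(2) \Rightarrow (3)$. We first invoke Theorem~\ref{thm:decomposition} to write $Q$ as the sum of an indecomposable affine mesh $(A_i;\varphi_{i,j};c_{i,j})$. A direct calculation shows that each $L_pL_q^{-1}$ (with $p \in A_k$, $q \in A_l$) acts on every fiber $A_j$ as translation by $c_{k,j} - c_{l,j} + \varphi_{k,j}(p) - \varphi_{l,j}(q)$, yielding group homomorphisms $t_j\colon \dis Q \to A_j$ which are surjective by transitivity on the orbit and injective by semiregularity, hence isomorphisms. We use them to identify every $A_j$ with the common abelian group $A := \dis Q$; fixing a base point $i_0 \in I$ and setting $d_i := L_{0_i}L_{0_{i_0}}^{-1}$, the transferred constants become $d_i - d_j$. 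The main obstacle will be to show that the transferred maps $\bar\varphi_{i,j} := t_j^{-1}\varphi_{i,j}t_i$ all coincide with a single endomorphism $1 - \psi$ of $A$; this requires a careful bookkeeping with the mesh axioms (M3) and (M4), verified on a convenient set of generators of $\dis Q$. Finally, (M1) guarantees that $\psi \in \aut A$.
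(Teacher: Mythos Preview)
The paper does not actually prove Theorem~\ref{thm:embed_affine}: it is quoted from~\cite{JPSZ-qa} as background for the present paper's main result (Theorem~\ref{thm:homim_affine}), so there is no proof here to compare your argument against.

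On its own merits your plan is sound. The implications $(1)\Rightarrow(2)$ and $(3)\Rightarrow(1)$ are carried out completely and correctly (your embedding into $\aff{A\oplus M,\bar\psi}$ via the free abelian group on~$I$ is a clean construction). For the sketched step $(2)\Rightarrow(3)$, the deferred bookkeeping is short and goes exactly as you indicate: axioms (M3) and (M4) give $\varphi_{i,j}\bigl(t_i(\alpha)\bigr)=\varphi_{j,j}\bigl(t_j(\alpha)\bigr)$ for all $\alpha\in\dis Q$, hence $\bar\varphi_{i,j}=\bar\varphi_{j,j}$; then one observes that $1-\bar\varphi_{j,j}$ is conjugation of $\dis Q$ by $L_{0_j}$, which is independent of~$j$ because $\dis Q$ is abelian and $L_{0_j}L_{0_k}^{-1}\in\dis Q$. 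Calling this common automorphism~$\psi$ yields $\bar\varphi_{i,j}=1-\psi$ for all $i,j$, with $\psi\in\aut A$ by~(M1).
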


Given a multiplication table of a quandle $Q$, it is easy to verify condition (2). Given an affine mesh, it is easy to verify condition (3). However, in either case, we do not know how to find efficiently an abelian group $A$ and its automorphism $f$ such that $Q$ embeds into $\aff{A,f}$.

The key to recognition of quandles that are quotients of affine quandles is the \emph{size} of the displacement group, which translates into a certain form of linearity of the underlying meshes. 

\begin{de}
We will say that a quandle $Q$ has a \emph{tiny displacement group} if, for some $e\in Q$, \[ \dis Q=\{L_xL_e^{-1}:x\in Q\}. \]
\end{de}

Note that if $\dis Q$ is tiny, then $\dis Q=\{L_xL_f^{-1}:x\in Q\}$ for \emph{every} $f\in Q$: to express $L_xL_e^{-1}$ as $L_yL_f^{-1}$ for some $y$, consider $L_xL_e^{-1}L_fL_e^{-1}\in\dis Q$ and take $y$ such that it equals $L_yL_e^{-1}$. 

The following is the main result of the present paper.

\begin{theorem}\label{thm:homim_affine}
The following statements are equivalent for a quandle $Q$:
\begin{enumerate}
	\item $Q$ is a homomorphic image of an affine quandle;
	\item $\dis Q$ is abelian and tiny;
	\item $Q$ is the sum of an affine mesh $((A_i);(\varphi_{i,j});(c_{i,j}))$ such that 
	$$\{ (\varphi_{i,j}(a)+c_{i,j})_{j\in I}:\ i\in I,\, a\in A_i\}\subseteq\prod_{j\in I} A_j$$
	is a coset of a subgroup of $\prod A_j$.
\end{enumerate}
\end{theorem}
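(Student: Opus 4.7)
The plan is to prove the cycle $(1) \Rightarrow (2) \Leftrightarrow (3) \Rightarrow (1)$, with the last implication being the main substance.

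For $(1) \Rightarrow (2)$, I would first check directly that every affine quandle itself satisfies (2): in $\aff{A,f}$ the composition $L_xL_e^{-1}$ is translation by $(1-f)(x-e)$, and as $x$ varies this sweeps out all of $\im{1-f}\cong\dis{\aff{A,f}}$, which is abelian. Then a surjective quandle homomorphism $\pi\colon R\twoheadrightarrow Q$ induces a surjective group homomorphism $\pi_*\colon \dis R \twoheadrightarrow \dis Q$ defined by $L_xL_y^{-1}\mapsto L_{\pi(x)}L_{\pi(y)}^{-1}$; abelianness passes through $\pi_*$, and tinyness passes through since $\pi_*(L_xL_e^{-1})=L_{\pi(x)}L_{\pi(e)}^{-1}$ ranges over all of $\dis Q$ as $x$ ranges over $R$.

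For $(2) \Leftrightarrow (3)$, I would use Theorem~\ref{thm:decomposition} to represent $Q$ as the sum of an indecomposable affine mesh $(A_i;\varphi_{i,j};c_{i,j})$. A direct computation from the mesh formula for the operation shows that, for $x\in A_i$ and a fixed $e\in A_{i_0}$, the element $L_xL_e^{-1}$ acts on each orbit $A_j$ as translation by $(\varphi_{i,j}(x)+c_{i,j})-(\varphi_{i_0,j}(e)+c_{i_0,j})$. This gives an injective group homomorphism $\dis Q \hookrightarrow \prod_j A_j$, under which the generating set $\{L_xL_e^{-1}:x\in Q\}$ is exactly the set $T$ from (3), shifted by the constant vector $-(\varphi_{i_0,j}(e)+c_{i_0,j})_j$. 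Hence this generating set is itself a subgroup, i.e.\ $\dis Q$ is tiny, if and only if $T$ is a coset; abelianness of $\dis Q$ follows from mediality, which is built into the mesh representation.

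The principal difficulty is $(3) \Rightarrow (1)$, which requires an explicit construction of the covering affine quandle from the mesh data. Given $T=d+H$, the group $H$ plays the role of $\dis Q$, and in any cover $\aff{A,f}\twoheadrightarrow Q$ we expect $H$ to correspond to $\im(1-f)\le A$. This suggests building $A$ as an extension of $H$ by an ``orbit-indexing'' group whose cosets surject onto the index set $I$, and defining $f\colon A\to A$ by patching together the individual orbit automorphisms $1-\varphi_{j,j}$ in a way compatible with the constants $c_{i,j}$; the projection $\aff{A,f}\twoheadrightarrow Q$ would then send a cover element to its image in the appropriate $A_j$-fiber. The hard part will be choosing this extension so that $f$ is a well-defined automorphism of $A$ and the projection really is a quandle homomorphism; both verifications will rely crucially on the mesh axioms (M1)--(M4) together with the coset condition, and the difficulty is further illustrated by the fact that even for small examples (e.g.\ $\aff{\mathbb{Z}_4,-1}$) the covering group is a nontrivial extension rather than a direct product. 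If this goes through, the construction is explicit enough to yield the polynomial-time algorithm promised in the introduction.
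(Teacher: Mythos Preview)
Your handling of $(1)\Rightarrow(2)$ and $(2)\Leftrightarrow(3)$ is essentially identical to the paper's. The divergence is in the constructive direction, and it is worth flagging two points.

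First, the paper does \emph{not} prove $(3)\Rightarrow(1)$; it proves $(2)\Rightarrow(1)$ directly, working with $\dis Q$ rather than with the mesh data. The covering group is taken to be the direct product
\[
A=\dis Q \times (T,\oplus),
\]
where $T$ is a multitransversal to the Cayley kernel $\lambda=\{(a,b):L_a=L_b\}$ chosen so that it meets every orbit, and $\oplus$ is an abelian group operation on $T$ defined by $a\oplus b=c$ iff $L_aL_e^{-1}L_b=L_c$ (disambiguated within a $\lambda$-block by an auxiliary abelian group $K$ of size equal to the multiplicity). The automorphism is $f(\alpha,a)=(L_e\alpha L_a^{-1},a)$, and the projection is $(\alpha,a)\mapsto\alpha(a)$. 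The verifications that $f\in\aut A$ and that $\psi$ is a surjective homomorphism are short and use only abelianness, tinyness, and normality of $\dis Q$ in $\lmlt Q$.

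Second, your expectation that the cover must be a nontrivial extension is a red herring. In your example of $\aff{\Z_4,-1}$ mapping onto the $3$-element quandle, the paper's construction does \emph{not} recover $\Z_4$; it produces $\dis Q\times(T,\oplus)\cong\Z_2\times\Z_2$ with a suitable automorphism, which is a different (and perfectly good) affine cover. So a direct product always suffices, and the algorithmic content comes from the multitransversal to $\lambda$ rather than from any extension problem. Your proposed route through the mesh---patching the $1-\varphi_{j,j}$ and solving an extension problem---might be made to work, but it is less direct, and the key structural idea (the group $(T,\oplus)$ built from the Cayley kernel) is missing from your sketch.
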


Given a multiplication table of a quandle $Q$, it is easy to verify condition (2). Given an affine mesh, it is easy to verify condition (3) (a subset $X$ of a group $G$ is a coset if and only if, for any $h\in X$, $-h+X$ is a subgroup of $G$). As we shall see in the next section, the proof of Theorem \ref{thm:homim_affine} provides an efficient algorithm that constructs an abelian group $A$ and its automorphism $f$ such that $\aff{A,f}$ maps homomorphically onto $Q$.

Condition (3) is particularly easy to use for 2-reductive medial quandles, i.e., quandles given by meshes with $\varphi_{i,j}=0$ for all $i,j$ \cite[Section 6]{JPSZ}. Indeed, the sum of a mesh $(A_i,0,(c_{i,j}))$ is a homomorphic image of an affine quandle if and only if the rows of the matrix $(c_{i,j})$ form a coset in the group $\prod A_j$.

\begin{example}
All of the following properties are easy to verify using conditions (3) in Theorems \ref{thm:embed_affine} and \ref{thm:homim_affine}.
\begin{itemize}
	\item The sum of the affine mesh \[ \left((\Z_2,\Z_2,\Z_2);\ \left( \begin{smallmatrix} 0 & 0 & 0\\ 0 & 0 & 0 \\ 0 & 0 & 0\end{smallmatrix}\right);\ \left(\begin{smallmatrix} 0&0&1\\0&0&1\\1&1&0\end{smallmatrix}\right)\right) \]
	both embeds into an affine quandle (for instance into $\aff{\Z_8,5}$ as $\{0,4,2,6,1,5\}$), and it is a homomorphic image of an affine quandle (for example of
	$\aff{\Z_8,5}$ over the congruence $\{0\},\{2\},\{4\},\{6\},\{1,3\},\{5,7\}$). However, it is not affine, since it has three orbits, unlike any of the two 6-element affine quandles.
	\item The sum of the affine mesh \[ \left((\Z_3,\Z_3);\ \left( \begin{smallmatrix} 0 & 0 \\ 0 & 0\end{smallmatrix}\right);\ \left(\begin{smallmatrix} 0&1\\1&0\end{smallmatrix}\right)\right) \]
	embeds into an affine quandle (for instance into $\aff{\Z_9,4}$ as $\{0,3,6,1,4,7\}$), but it is not a homomorphic image of an affine quandle.
	\item The sum of the affine mesh \[ \left((\Z_2,\Z_1);\ \left( \begin{smallmatrix} 0 & 0 \\ 0 & 0\end{smallmatrix}\right);\ \left(\begin{smallmatrix} 0&0\\1&0\end{smallmatrix}\right)\right) \]
	is a homomorphic image of an affine quandle (for example of $\aff{\Z_4,-1}$ over the congruence $\{0\},\{2\},\{1,3\}$), but does not embed into an affine quandle since
	it has orbits of different sizes.
\end{itemize}
\end{example}

\begin{example}
 We calculate all 2-reductive quandles with two orbits that are homomorphic images of affine quandles. Such quandles are sums of indecomposable affine meshes of type
 \[\left( (A,B);\ \left( \begin{smallmatrix} 0 & 0 \\ 0 & 0\end{smallmatrix}\right);\ \left( \begin{smallmatrix} 0 & b \\ a & 0\end{smallmatrix}\right) \right)\]
where $A=\langle a\rangle$ and $B=\langle b\rangle$. Condition (3) of Theorem \ref{thm:homim_affine} states that the rows of the matrix~$(c_{i,j})$ form a coset in $A\times B$. The coset is necessarily $(a,0)+\langle(-a,b)\rangle$, hence $2a=2b=0$, $A,B\in\{\Z_1,\Z_2\}$ and there are only three options, up to isomorphism:
 \[
 \left( (\Z_1,\Z_1);\ \left( \begin{smallmatrix} 0 & 0 \\ 0 & 0\end{smallmatrix}\right)
 ;\ \left( \begin{smallmatrix} 0 & 0 \\ 0 & 0\end{smallmatrix}\right) \right), \qquad
 \left( (\Z_2,\Z_1);\ \left( \begin{smallmatrix} 0 & 0 \\ 0 & 0\end{smallmatrix}\right)
 ;\ \left( \begin{smallmatrix} 0 & 0 \\ 1 & 0\end{smallmatrix}\right) \right), \quad
 \left( (\Z_2,\Z_2);\ \left( \begin{smallmatrix} 0 & 0 \\ 0 & 0\end{smallmatrix}\right)
 ;\ \left( \begin{smallmatrix} 0 & 1 \\ 1 & 0\end{smallmatrix}\right) \right).\]
\end{example}

\section{The proof and the algorithm}\label{sec:proof}



Informally, a \emph{multiset} is a generalization of the notion of a set where elements can repeat. Tuples can be considered as multisets, forgetting the indexing. A \emph{multitransversal} for a block system is a multiset which takes the same amount of elements from each block (i.e., a multiset $T$ such that $|T\cap B_1|=|T\cap B_2|$, for every pair of blocks $B_1,B_2$).
The {\em multiplicity} of a multitransversal is
the cardinality of each such $T\cap B_i$.

\begin{proof}[Proof of Theorem \ref{thm:homim_affine}]
$(1)\Rightarrow(2)$ Affine quandles satisfy (2) and both properties carry over to homomorphic images.

$(2)\Rightarrow(1)$ 
We shall construct a group~$A$ and an automorphism~$f$ such that $\aff{A,f}$ maps homomorphically onto $Q$.
Let $T$ be a multitransversal to the Cayley kernel $\lambda=\{(a,b):L_a=L_b\}$ which contains at least one element from each orbit of $Q$ (take a transversal, add a representative of every orbit, and increase multiplicity of selected elements to obtain a multitransversal). Let $\kappa$ be the multiplicity of $T$.

We will treat the elements of~$T$ as formally different and construct an abelian group operation on~$T$. Choose an element~$e\in T$ which will play the role of zero.
Consider an arbitrary abelian group $K=(K,+,-,0)$ of order $\kappa$ and an arbitrary mapping $\nu:T\to K$ which is bijective on every block of $T^2\cap\lambda$ and satisfies $\nu(e)=0$.
Define an operation $\oplus$ on~$T$ by
\[a\oplus b=c\ \Leftrightarrow\ L_aL_e^{-1}L_b=L_c\text{ and }\nu(a)+\nu(b)=\nu(c).\]
The operation is well defined: $\alpha=L_aL_e^{-1}L_bL_e^{-1}$ is in $\dis Q$ which is tiny, hence there exists $c\in Q$ such that $\alpha=L_cL_e^{-1}$ and among the $\kappa$ candidates for $c$ in~$T$, there is a unique one with $\nu(c)=\nu(a)+\nu(b)$. 

Clearly, $e$ is  a unit element for $\oplus$. An inverse to $a$ is an element $b$ such that $L_aL_e^{-1}L_b=L_e$ and $\nu(a)+\nu(b)=0$, that is, $b$ such that $L_bL_e^{-1}=(L_aL_e^{-1})^{-1}$ and $\nu(b)=-\nu(a)$; such $b$ exists because $\dis Q$ is tiny. The operation $\oplus$ is commutative because both $\dis Q$ and $K$ are abelian. It is associative, because $d=a\oplus(b\oplus c)$ if and only if $L_d=L_aL_e^{-1}L_{b\oplus c}=L_aL_e^{-1}L_bL_e^{-1}L_c$ and $\nu(d)=\nu(a)+\nu(b\oplus c)=\nu(a)+\nu(b)+\nu(c)$; the two rightmost expressions do not depend on the bracketing.

Now, let \[A=\dis{Q}\times(T,\oplus)\] and consider the mapping \[f:A\to A,\quad (\alpha,a)\mapsto(L_e\alpha L_a^{-1},a).\]
Then $f$ is an endomorphism of $A$, because
\begin{multline*}
f((\alpha,a))+f((\beta,b))=(L_e\alpha L_a^{-1},a)+(L_e\beta L_b^{-1},b)
=(L_e\alpha\beta L_a^{-1}L_eL_b^{-1},a\oplus b)=\\
=(L_e\alpha\beta L_e^{-1}(L_bL_e^{-1}L_aL_e^{-1})^{-1},a\oplus b)=
(L_e\alpha\beta L_e^{-1}(L_{b\oplus a}L_e^{-1})^{-1},a\oplus b)=f(\alpha\beta,a\oplus b).
\end{multline*}
The kernel of~$f$ is trivial: $\mathrm{Ker}(f)=\{(\alpha,a):L_e\alpha L_a^{-1}=1$ and $a=e\}=\{(1,e)\}$. To show that $f$ is onto, notice that $(\alpha,a)=f((L_e^{-1}\alpha L_a,a))$ where 
$L_e^{-1}\alpha L_a=L_e^{-1}(\alpha L_aL_e^{-1})L_e\in\dis Q$ since it is normal in $\lmlt Q$. Hence $f$ is an automorphism of $A$.

Finally, consider \[\psi:\aff{A,f}\to Q,\quad (\alpha,a)\mapsto \alpha(a).\] 
We have
\begin{align*}
\psi((\alpha,a)*(\beta,b))&=\psi((\alpha,a)f((\alpha,a))^{-1}f((\beta,b)))\\ &=\psi((\alpha L_a\alpha^{-1}L_e^{-1}L_e\beta L_b^{-1},b))=
\alpha L_a\alpha^{-1}\beta L_b^{-1}(b)=L_{\alpha(a)}\beta(b)=\alpha(a)*\beta(b),
\end{align*}
hence $\psi$ is a homomorphism. It is onto, because each orbit of $Q$ contains at least one element in $T$.

$(2)\Leftrightarrow(3)$ 
Assume that $Q$ is the sum of an affine mesh $((A_i);(\varphi_{i,j});(c_{i,j}))$. The displacement mapping $L_aL_b^{-1}$ with $a\in A_i$, $b\in A_j$ can be expressed as
\[L_aL_b^{-1}(x)=x+\varphi_{i,k}(a)-\varphi_{j,k}(b)+c_{i,k}-c_{j,k} \text{ whenever }x\in A_k. \]
We will prove that $\dis Q$ is tiny if and only if $\{ (\varphi_{i,k}(a)+c_{i,k})_{k\in I}:\ i\in I,\, a\in A_i\}$ is a coset in $\prod A_k$, that is, if and only if, for some $e\in A_j$, the set 
\[ S_e=-(\varphi_{j,k}(e)+c_{j,k})_{k\in I}+\{ (\varphi_{i,k}(a)+c_{i,k})_{k\in I}:\ i\in I,\, a\in A_i\} \] is a subgroup of $\prod A_k$.

Fix $e\in A_j$. Then the set $\Lambda_e=\{L_aL_e^{-1}:a\in Q\}$	is in 1-1 correspondence with the set $S_e$, where $L_aL_e^{-1}$ corresponds to the tuple $=(-\varphi_{j,k}(e)-c_{j,k}+\varphi_{i,k}(a)+c_{i,k})_{k\in I}$. Moreover, composition of mappings from $\Lambda_e$ corresponds to addition of the corresponding elements of $S_e$. Therefore, $\Lambda_e$ is closed with respect to composition and inversion if and only if $S_e$ is closed with respect to addition and subtraction.

Consequently, $\dis Q$ is tiny if and only if $\dis Q=\Lambda_e$ for some $e$, which is equivalent to $S_e\leq\prod A_k$.
\end{proof}


\def\krok#1#2{#1 & #2 \\}
\def\io#1#2{{$\ $}\\ \begin{tabular}{ll} {\bf In:} & #1 \\ {\bf Out:} & #2 \end{tabular} \\}
\def\afont#1{{\bf #1}}

From an algorithmic point of view, the size of the multitransversal $T$ is important. Indeed, $|T|=|Q/\lambda|\cdot\kappa$, and we can always take $\kappa$ at most the number of orbits. Both values are bounded by the size of $Q$, hence we can always find $T$ such that $|T|\leq |Q|^2$. The following example shows that there is also a quadratic lower bound. 

\begin{example}\label{ex:max_quandle}
Let $n,k$ be natural numbers such that $2^k<n$. Consider the affine mesh $\mathcal A_{n,k}=(Z;0;C)$ where 
$Z=(\Z_2,\dots,\Z_2,\Z_1,\dots,\Z_1)$ contains $k$ copies of $\Z_2$ and $n-k$ copies of $\Z_1$, $0$ is the matrix of zero homomorphisms, and 
\[ C=\left(\begin{array}{c|c}
D & 0 \\ \hline
0 & 0 
\end{array}\right) \]
where $D$ is an arbitrary $2^k\times k$ matrix over $\Z_2$ whose set of row vectors equals $\Z_2^k$. To simplify notation, assume that the zero vector is in the last row.

Clearly, $\mathcal A_{n,k}$ is an indecomposable affine mesh and using Theorem~\ref{thm:homim_affine}, we see that its sum, $Q$, is a homomorphic image of an affine quandle.
The quandle $Q$ has $n$ orbits and $|Q|=n+k$.

Observe that the Cayley kernel of $Q$ consists of all pairs $(a,b)$ where $a$ is in the $i$-th orbit, $b$ is in the $j$-th orbit, and the $i$-th and $j$-th row in $C$ are equal. 
Therefore, $\lambda$ has $2^k$ blocks. Most of them have 1 or 2 elements, and there is one large block $B$ of size $n-2^k+1$, consisting of singleton orbits with indices $2^k,\dots,n$.
Consequently, any multitransversal $T$ intersectiong all orbits must choose all elements from $B$, thus its multiplicity $\kappa$ must be at least $n-2^k+1$ and we get 
\[ |T|\geq|Q/\lambda|\cdot\kappa=2^k\cdot(n-2^k+1) \]
(the lower bound can be achieved). In particular, if $n=2^{k+1}$, we have $|T|\geq\frac n2\cdot(\frac n2+1)\approx|Q|^2/4$.
\end{example}

In the rest of the section, we present two algorithms.
The input is a finite quandle, in any form that allows efficient calculation of left translations (for example, the multiplication table, or the corresponding affine mesh). 
In the first one, we are asked to decide whether it is a homomorphic image of an affine quandle. In the second one, we are asked to find such an affine quandle.

\begin{algorithm}\label{alg:homim-affine}
\noindent
\io{a finite quandle $Q$}{is $Q$ a homomorphic image of an affine quandle?}
\begin{tabular}{ll}
\krok{1.}{pick $e\in Q$ }
\krok{2.}{$D:=\{L_xL_e^{-1}:x\in Q\}$}
\krok{3.}{\afont{for each} $\alpha,\beta\in D$ \afont{do}}
\krok{4.}{\quad \afont{if} $\alpha\beta\neq\beta\alpha$ or $\alpha\beta\not\in D$ \afont{then return} false}
\krok{5.}{\afont{return} true}
\end{tabular}
\end{algorithm}

On line 2, we define the group $\dis Q$. On lines 3 and 4, we verify condition (2) of Theorem \ref{thm:homim_affine}: if we find a non-commuting pair, or if we find a pair whose composition is not inside $D$, the algorithm reports a failure.

\begin{proposition}\label{p:alg-homim-affine}
Algorithm \ref{alg:homim-affine} runs in a polynomial time with respect to $n=|Q|$, namely $\bigo(n^4\log n)$ (i.e., in a quadratic time with respect to the input size). 
\end{proposition}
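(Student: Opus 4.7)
The plan is to bound separately the time for building the set $D$ (line~2) and for the double loop (lines 3--4). Throughout, a left translation $L_x$ is represented as an array of length $n=|Q|$, which can be filled from the multiplication table in $\bigo(n)$ time; composing or lexicographically comparing two such arrays takes $\bigo(n)$ word operations, and each word stores an index into $Q$, i.e., $\bigo(\log n)$ bits.

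First I would note that $|D|\leq n$, since $D$ is indexed by $x\in Q$. The inverse $L_e^{-1}$ is computed once in $\bigo(n)$ (invert the array $L_e$), then each $L_xL_e^{-1}$ is assembled in $\bigo(n)$, for a total of $\bigo(n^2)$ word operations to produce the list. Sorting the $n$ resulting arrays uses $\bigo(n\log n)$ comparisons at cost $\bigo(n)$ each, so deduplicating the list into the set $D$ costs $\bigo(n^2\log n)$ word operations.

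For the main loop, there are at most $|D|^2\leq n^2$ pairs. For each pair $(\alpha,\beta)$, I would form $\alpha\beta$ and $\beta\alpha$ and test their equality in $\bigo(n)$ word operations, and test membership of $\alpha\beta$ in $D$ by binary search in the sorted copy at cost $\bigo(n\log n)$ (or by a naive linear scan at cost $\bigo(n^2)$). Even with the linear scan, the loop contributes $\bigo(n^2\cdot n^2)=\bigo(n^4)$ word operations, which dominates the construction of $D$.

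Converting word operations to bit operations multiplies by $\bigo(\log n)$, yielding the stated $\bigo(n^4\log n)$ bound; since a multiplication table has $n^2$ entries of $\bigo(\log n)$ bits each, this is quadratic in the input size up to a logarithmic factor. There is no real obstacle here; the only points worth flagging are the choice of a representation in which $L_x(y)$ is available cheaply (the multiplication table is fine directly, while for input given as an affine mesh one first precomputes the left translations in polynomial time), and storing $D$ in a sorted list or hash table so that membership tests and deduplication stay within the budget.
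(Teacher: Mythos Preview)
Your argument is correct and follows the same approach as the paper's proof: both bound $|D|\le n$, count $n^2$ iterations of the loop, and identify the membership test in $D$ (via a linear scan at cost $\bigo(n^2\log n)$ per test) as the dominant step, giving $\bigo(n^4\log n)$ overall. Your writeup is merely more explicit about data structures and the word-versus-bit distinction behind the $\log n$ factor.
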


\begin{proof}
All operations performed with permutations on $Q$ (comparison, composition) can be calculated in $\bigo(n\log n)$ time. The set $D$ has at most $n$ elements, hence the loop on lines 3--5 performs at most $n^2$ steps. Checking commutativity takes $\bigo(n\log n)$ time (two compositions, one comparison), checking containment in $D$ takes $\bigo(n^2\log n)$ time (one composition, at most $n$ comparisons).
\end{proof}


\begin{algorithm}\label{alg:homim-affine2}
\noindent
\io{a finite quandle $Q$}{$(A,f)$ such that $Q\simeq\aff{A,f}$, or failure if such $(A,f)$ does not exist}
\begin{tabular}{ll}
\krok{1.}{pick $e\in Q$ }
\krok{2.}{$D:=\{L_xL_e^{-1}:x\in Q\}=\{\alpha_0,\dots,\alpha_{m-1}\}$ where $\alpha_0=1$}
\krok{3.}{\afont{for each} $\alpha,\beta\in D$ \afont{do}}
\krok{4.}{\quad \afont{if} $\alpha\beta\neq\beta\alpha$ or $\alpha\beta\not\in D$ \afont{then stop} with failure}
\krok{5.}{\afont{for each} $0\leq i<m$ \afont{do}}
\krok{6.}{\quad find all $x_{i,0},\dots,x_{i,k_i-1}$ such that $L_{x_{i,j}}L_e^{-1}=\alpha_i$ (for $i=0$, take $x_{0,0}=e$)}
\krok{7.}{$\kappa:=\max\{k_i\}$}
\krok{8.}{\afont{for each} $0\leq i<m$ and $0\leq j < \kappa$ \afont{do}}
\krok{9.}{\quad $T_{i\cdot \kappa +j}:=x_{i,j \bmod k_i}$}
\krok{10.}{\afont{for each} $0\leq i,i'<m$ and $0\leq j,j'<\kappa$ \afont{do}}
\krok{11.}{\quad set $T_{i\cdot \kappa+j}\oplus T_{i'\cdot\kappa+j'}:=T_{i''\cdot\kappa+((j+j')\bmod \kappa)}$ such that $L_{x_{i'',0}}L_e^{-1}=L_{x_{i,0}}L_e^{-1}L_{x_{i',0}}L_e^{-1}$}
\krok{12.}{\afont{return} $(D\times T,f)$ where $f(\alpha,T_{i\cdot\kappa+j})=(L_e\alpha L_{x_{i,j}}^{-1},T_{i\cdot\kappa+j})$}
\end{tabular}
\end{algorithm}

On lines 5 and 6 we calculate the blocks of the Cayley kernel $\lambda$.
To keep things simple and to avoid calculation of the orbits of $Q$, we put every element of~$Q$ in~$T$ (occasionally several times).
We implicitly choose the group~$K$ to be the cyclic group $\Z_\kappa$. The construction of the group $(T,\oplus)$ with $T=\{T_k:k=0,\dots,m\kappa-1\}$ on lines 10 and 11 follows the proof of Theorem \ref{thm:homim_affine}.

\begin{proposition}\label{p:alg-homim-affine2}
Algorithm \ref{alg:homim-affine2} runs in a polynomial time with respect to $n=|Q|$, namely $\bigo(n^6\log n)$ (i.e., in a cubic time with respect to the input size). 
\end{proposition}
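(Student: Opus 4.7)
The plan is a line-by-line cost accounting, parallel to the argument for Proposition~\ref{p:alg-homim-affine}, using throughout that permutation operations (comparison, composition, inversion) on $n$-element permutations each cost $\bigo(n\log n)$. First I would record the relevant size bounds: $m=|D|\leq n$, the multiplicity $\kappa$ is at most the largest Cayley class and hence $\kappa\leq n$, and therefore $|T|=m\kappa\leq n^2$ (as remarked before the algorithm; the quadratic bound is essentially tight in the worst case by Example~\ref{ex:max_quandle}). In particular the output index set $D\times T$ has size at most $n^3$.

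Next I would dispatch the inexpensive phases. Lines~1--4 coincide with Algorithm~\ref{alg:homim-affine}, so they already cost $\bigo(n^4\log n)$ by Proposition~\ref{p:alg-homim-affine}. Lines~5--6 amount to bucketing the at most $n$ permutations $L_xL_e^{-1}$ against the $m$ representatives of $D$, which takes $\bigo(n\cdot m\cdot n\log n)=\bigo(n^3\log n)$; lines~7--9 then fill the array of length $m\kappa\leq n^2$ with constant work per entry.

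The main cost, as I expect, will be the quadruple loop on lines~10--11. It has at most $(m\kappa)^2\leq n^4$ iterations, and each one requires composing two permutations $\alpha_i\alpha_{i'}$ and then locating the product inside $D$; by linear scan the search costs $\bigo(m\cdot n\log n)=\bigo(n^2\log n)$, so the whole loop runs in $\bigo(n^6\log n)$. Line~12 adds one more permutation product per element of $D\times T$, i.e.\ $\bigo(n^4\log n)$, which is absorbed by the previous bound. Summing, the total is the advertised $\bigo(n^6\log n)$, and since the multiplication table of $Q$ has size $\Theta(n^2)$, this is cubic in the input size up to a logarithmic factor.
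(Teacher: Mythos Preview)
Your proof is correct and follows essentially the same route as the paper: the paper's own argument is a terse version of yours, identifying the loop on lines~10--11 as the bottleneck with $m^2\kappa^2$ iterations each costing $\bigo(m\cdot n\log n)$ for the search in $D$, and then invoking the worst case $m,\kappa=\Theta(n)$. Your additional line-by-line accounting for the cheaper phases (lines~1--9 and line~12) only confirms that they are dominated by this term.
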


\begin{proof}
The most costly part is the cycle on lines 10 and 11 requiring $m^2\kappa^2$ steps, each of complexity $m\cdot n\log n$
(search for a permutation in a list of length $m$). In the worst case, we have to assume both $m,\kappa=\Theta(n)$.
\end{proof}

Our choice of $T$ is simple, but often not optimal in terms of size. Here we outline a better approach. 
First, construct a subset $S\subseteq Q$ which is a transversal of the orbit decomposition, and minimizes the maximal number of elements taken from any single block of $\lambda$; shortly,  \[\min_S \max_{B\in Q/\lambda} |B\cap S|. \]
Given $S$, we obtain $T$ by adding a proper amount of arbitrarily chosen elements from each block of $\lambda$. Finding the optimal set $S$ can be formulated as an instance of integer linear programming. Let $A_1,\dots,A_n$ be the orbits of $Q$, and $B_1,\dots,B_m$ the blocks of $\lambda$. Let $x_{i,j}$ be the indeterminates that tell how many elements we choose from $A_i\cap B_j$. We set $x_{i,j}=0$ whenever the two sets are disjoint. The constraints are $x_{i,j}\geq 0$ and $\sum_{j=1}^m x_{i,j}=1$ for every $i=1,\dots,n$ (thus we choose exactly one element from each $A_i$). We minimize $c$ such that $\sum_{i=1}^n x_{i,j}\leq c$ for every $j=1,\dots,m$.
While integer linear programming is a difficult problem in general, there are efficient heuristics for finding good solutions.

For the quandle from Example \ref{ex:max_quandle}, our choice of $T$ is optimal. Therefore, the worst case asymptotic complexity of our algorithm cannot be improved by a better choice of $T$.

\begin{remark}
In \cite{JPSZ-qa} we described an efficient algorithm to recognize quandles isomorphic to affine quandles, but we do not know how to find efficiently the actual affine representation (i.e., the group and its automorphism).
The construction from the proof of Theorem \ref{thm:homim_affine} does not help either. The homomorphism $\psi:\aff{A,f}\to Q$ constructed in the proof is bijective if and only if the multitransversal $T$ has precisely one element from each orbit of $Q$. However, many affine quandles do not admit such $T$: for example, every affine latin quandle $Q$ has only one orbit, but $|T|\geq|Q|$.
\end{remark}



\subsection*{Acknowledgement}
The authors wish to dedicate the paper to memory
of Patrick Dehornoy who was one of the most inspiring
guides through the world of self-distributivity.

\end{document}